\newtheorem{theorem}{Theorem}
\theoremstyle{plain}
\newtheorem{conjecture}{Conjecture}
\newtheorem{corollary}{Corollary}
\newtheorem{remark}{Remark}
\numberwithin{equation}{section}
\begin{document}

{\footnotesize%
\hfill
}

  \vskip 1.2 true cm

\begin{center} {\bf On the quasi polynomiality of extremal homology of configuration spaces} \\
          {by}\\
{\sc Muhammad Yameen}
\end{center}

\pagestyle{myheadings}
\markboth{On the quasi polynomiality of extremal homology of configuration spaces}{Muhammad Yameen}

\begin{abstract}
Consider the unordered configuration spaces of manifolds. Knudsen, Miller and Tosteson proved that the extremal homology groups of configuration spaces of manifold are eventually quasi polynomials. In this paper, we give the precise degree of top non-trivial quasi polynomials. This shows that the upper bound of Knudsen, Miller and Tosteson for the degree of quasi polynomials is sharp for every manifold. 
\end{abstract}

\begin{quotation}
\noindent{\bf Key Words}: {Configuration spaces, quasi polynomials, extremal stability}\\

\noindent{\bf Mathematics Subject Classification}:  Primary 55R80, Secondary 55P62.
\end{quotation}

\thispagestyle{empty}

\section{Introduction}

\label{sec:intro}


For any manifold $M$, let
$$F_{n}(M):=\{(x_{1},\ldots,x_{n})\in M^{n}| x_{i}\neq x_{j}\,for\,i\neq j\}$$
be the configuration space of $n$ distinct ordered points in $M$ with induced topology. The symmetric group $S_{k}$ acts on $F_{k}(M)$ by permuting the coordinates. The quotient $$B_{n}(M):=F_{n}(M)/S_{n} $$
is the unordered configuration space with quotient topology. 

It is well known fact that the homology groups $H_{i}(B_{n}(M);\mathbb{Q})$ are vanish for $i\geq\nu_{n},$ where $\nu_{n}=(d-1)n+1.$ In the paper \cite{KMT} (see also \cite{Y}), Knudsen, Miller and Tosteson prove that the extremal homology groups of configuration spaces of manifold are eventually quasi polynomials:

\begin{theorem}\label{Th1}
Let $M$ be a manifold of even dimension $d\geq 2.$ For each $i\geq 0,$ there is a quasi-polynomial in $n$ of degree at most dim $H_{d-1}(M;\mathbb{Q}^{w})-1)$ and period at most 2, which coincides with dim $H_{\nu_{n}-i}(B_{n}(M);\mathbb{Q})$ for all $n$ sufficiently large.
\end{theorem}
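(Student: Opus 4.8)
The plan is to reduce Theorem~\ref{Th1} to the Hilbert--Serre theorem for finitely generated graded modules over a polynomial ring. First I would invoke Knudsen's factorization-homology computation of rational configuration-space homology, which presents the total homology $\bigoplus_{n\geq 0}H_*(B_n(M);\mathbb{Q})$ as the homology of an explicit bigraded complex $(A_{*,*},\partial)$ that is free graded-commutative as an algebra. The two gradings are the homological degree and the weight $n$ (the number of points); the differential $\partial$ preserves weight and lowers homological degree by one. For even $d$ the algebra generators are: the classes of $H_i(M;\mathbb{Q})$ in bidegree $(i,1)$, one point each; and the classes of $H_j(M;\mathbb{Q}^w)$ in bidegree $(j+d-1,2)$, where the shift $d-1$ is the degree of the fundamental class of the linking sphere $S^{d-1}$ of a colliding pair (equivalently, by Poincar\'e duality, these weight-$2$ generators are indexed by $H_c^{d-j}(M;\mathbb{Q})$).

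Next I would restrict to the extremal line. With $\nu_n=(d-1)n+1$, a monomial of weight $n=\sum_a w_a$ sitting in homological degree $\nu_n-i=\sum_a h_a$ satisfies the deficiency identity $\sum_a\big((d-1)w_a-h_a\big)=i-1$, the sum running over its generating factors of bidegree $(h_a,w_a)$. The factors of slope exactly $d-1$ (deficiency $0$) are precisely the weight-$1$ classes of $H_{d-1}(M;\mathbb{Q})$, lying in homological degree $d-1$, and the weight-$2$ classes of $H_{d-1}(M;\mathbb{Q}^w)$, lying in homological degree $2(d-1)$. Here the evenness of $d$ is decisive: it makes $d-1$ odd, so the slope-$(d-1)$ weight-$1$ classes are exterior --- each appears at most once, contributing only a bounded factor --- whereas the slope-$(d-1)$ weight-$2$ classes sit in the even degree $2(d-1)$ and hence generate an honest polynomial subalgebra $R:=\mathbb{Q}[x_1,\ldots,x_r]$, with $r=\dim H_{d-1}(M;\mathbb{Q}^w)$ and each generator of bidegree $(2(d-1),2)$.

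I would then prove that, for each fixed $i$, the extremal groups $H_{\nu_n-i}(B_n(M);\mathbb{Q})$ assemble over $n$ into a finitely generated graded $R$-module, the $R$-action being multiplication by the even weight-$2$ classes, realized geometrically by a stabilization map $H_*(B_n(M))\to H_{*+2(d-1)}(B_{n+2}(M))$. Granting finite generation, the Hilbert--Serre theorem shows that the function $n\mapsto\dim H_{\nu_n-i}(B_n(M);\mathbb{Q})$ agrees, for all large $n$, with a quasi-polynomial of degree at most $r-1=\dim H_{d-1}(M;\mathbb{Q}^w)-1$ and period dividing $2$, the common weight of the $x_a$. This is exactly the conclusion of the theorem.

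The main obstacle is the passage from the free graded-commutative algebra $A$ to the $R$-module structure on its homology, that is, controlling $\partial$ along the extremal line. Two points need care. First, one must check $\partial$ is $R$-linear, so that the homology is genuinely an $R$-module. Second, and more seriously, at the chain level $A$ need not be finitely generated over $R$: when $M$ is closed the fundamental class $[M]\in H_d(M;\mathbb{Q})$ is an even weight-$1$ generator of negative deficiency, and its powers, paired with positive-deficiency classes, produce infinitely many monomials of any fixed deficiency $i-1$. Finite generation can therefore hold only after taking homology, and one must show that $\partial$ removes exactly these contributions --- concretely, that $\partial$ encodes the Euler/intersection pairing of the fundamental class against the colliding-pair classes, so that its homology collapses, in each fixed deficiency, to a finitely generated $R$-module. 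Establishing this collapse, while carefully tracking the orientation twist that singles out $H_{d-1}(M;\mathbb{Q}^w)$ (rather than the untwisted $H_{d-1}(M;\mathbb{Q})$) as the polynomial generators, is the technical heart of the argument; the degree and period bounds then follow formally from Noetherianity of $R$ and Hilbert--Serre.
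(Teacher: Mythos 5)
You should first note a mismatch of targets: this paper does not prove Theorem \ref{Th1} at all --- it is quoted from Knudsen--Miller--Tosteson \cite{KMT}, and the paper's own results (Theorems \ref{Th2} and \ref{Th3}) are the complementary sharpness statements. So your attempt can only be compared with the cited proof and with the machinery the paper recalls in Sections 2--3. Measured that way, your outline follows essentially the same route as \cite{KMT}: Knudsen's Chevalley--Eilenberg model with its weight grading, the deficiency identity $\sum_a((d-1)w_a-h_a)=i-1$ along the extremal line, the parity observation that for $d$ even the deficiency-zero weight-one generators are exterior while the deficiency-zero weight-two generators lie in even degree $2(d-1)$ and span a polynomial algebra $R$ on $r=\dim H_{d-1}(M;\mathbb{Q}^{w})$ generators of weight $2$, and finally Hilbert--Serre to extract a quasi-polynomial of degree at most $r-1$ and period dividing $2$. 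All of that bookkeeping is correct, including the identification $W^{2d-2}\cong H^{1}_{c}(M;\mathbb{Q})\cong H_{d-1}(M;\mathbb{Q}^{w})$ that makes the twisted group the one that counts.

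The genuine gap is the step you yourself flag and then defer: finite generation of the extremal homology as an $R$-module is asserted, not proven, and it is the entire technical content of the theorem. Your diagnosis of why chain-level finite generation fails is accurate --- for closed $M$ the even-degree weight-one generator $v_{d}$ has deficiency $-1$, so its powers produce infinitely many monomials in each fixed deficiency --- but the collapse mechanism you gesture at ("$\partial$ encodes the pairing against the fundamental class") needs to be made precise, and in fact it is exactly what this paper formalizes for its own purposes: since $1\cup 1=1$ in $H^{*}_{c}(M;\mathbb{Q})$, one has $\partial(v_{d}^{2})=\pm w_{2d-1}$, so the ideal $(v_{d}^{2},w_{2d-1})$ is an acyclic subcomplex (Theorem \ref{Acyclic}), and homology is computed by the reduced complex in which $v_{d}^{\geq 2}=0$ and $w_{2d-1}=0$ (Corollary \ref{reduced}). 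In that reduced complex your argument closes up: in fixed deficiency $i-1$ a monomial contains $v_{d}$ at most once, at most $i$ factors of positive deficiency, and a bounded exterior factor of odd-degree deficiency-zero classes, so the reduced complex in fixed deficiency is a finitely generated free $R$-module; the differential is $R$-linear because the Lie algebra is two-step nilpotent (weight-two elements bracket to zero, and the $x_{a}$ have even degree); and homology of a finitely generated module over the Noetherian ring $R$ with $R$-linear differential is finitely generated, whence Hilbert--Serre applies. As written, your proposal is a correct strategy with the decisive lemma missing; supplying the acyclicity of $(v_{d}^{2},w_{2d-1})$ (and its twisted analogue for non-orientable $M$, which the general statement of Theorem \ref{Th1} requires) is what turns it into a proof.
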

If $H_{d-1}(M;\mathbb{Q})=0,$ then the extremal homology groups $H_{\nu_{n}-i}(B_{n}(M);\mathbb{Q})$ are eventually vanish. Equivalently, Theorem \ref{Th1} states that there are two polynomials $p_{M}^{\nu_{n}-i}(n)$ and $q_{M}^{\nu_{n}-i}(n)$ such that 
$$ Q_{M}^{\nu_{n}-i}(n) =\begin{cases}
      p_{M}^{\nu_{n}-i}(n) & \mbox{ $n$ is even}\\
      q_{M}^{\nu_{n}-i}(n) & \mbox{ $n$ is odd}.\\
   \end{cases} $$
where $ Q_{M}^{\nu_{n}-i}(n)=\mbox{dim} H_{\nu_{n}-i}(B_{n}(M);\mathbb{Q}).$ The degree of quasi-polynomial $ Q_{M}^{\nu_{n}-i}(n)$ is maximum of $ deg(p_{M}^{\nu_{n}-i}(n))$ and $deg(q_{M}^{\nu_{n}-i}(n).$ We give the precise degree of top non-trivial quasi polynomials for every orientable manifold. 
\begin{theorem}\label{Th2}
Let $M$ be a closed orientable manifold of even dimension $d\geq 2.$ If $H_{d-1}(M;\mathbb{Q})$ is non-trivial then the degree of quasi-polynomial $ Q_{M}^{\nu_{n}}(n)$ is dim$H_{d-1}(M;\mathbb{Q})-1.$ 
\end{theorem}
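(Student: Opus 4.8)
The plan is to reduce the statement to an explicit generating-function computation and then to isolate the unique family of generators responsible for polynomial growth along the top diagonal. First I would invoke the chain-level model underlying Theorem \ref{Th1}: over $\mathbb{Q}$ and for even $d$, the bigraded vector space $\bigoplus_n H_*(B_n(M);\mathbb{Q})$ is computed by the homology of a free graded-commutative algebra $(\Lambda,\partial)$ whose generators split into a weight-one family, one generator of homological degree $j$ for each element of a basis of $H_j(M;\mathbb{Q})$, and a weight-two family, one generator of homological degree $j+d-1$ for each basis element of $H_j(M;\mathbb{Q})$; here ``weight'' records the number of points $n$ and is preserved by the Knudsen differential $\partial$, which lowers homological degree by one. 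Writing $b_j=\dim H_j(M;\mathbb{Q})$, the top homological degree is $\nu_n=(d-1)n+1$, and I would first record the elementary observation that a monomial of weight $n$ has homological degree at most $\nu_n$, the ``slope'' (degree divided by weight) of each generator being at most $d-1$.

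The combinatorial heart is then to classify the generators of maximal slope $d-1$. Among weight-one generators these are exactly the $b_{d-1}$ classes coming from $H_{d-1}(M;\mathbb{Q})$, all of odd degree $d-1$ (since $d$ is even) and hence exterior; among weight-two generators they are exactly the $b_{d-1}$ classes coming from $H_{d-1}(M;\mathbb{Q})$, of even degree $2(d-1)$ and hence polynomial. The fundamental class in $H_d$ and all generators of slope strictly greater than $d-1$ cannot be used more than a bounded number of times without exceeding $\nu_n$. Consequently a top-degree monomial is a product of the $b_{d-1}$ polynomial weight-two generators, used with arbitrary multiplicities and contributing the factor $(1-q^{2(d-1)}t^2)^{-b_{d-1}}$, times a ``cap'' of bounded weight built from the exterior weight-one generators and the finitely many higher-slope classes. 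Extracting the coefficient of $q^{\nu_n}$ from the Poincaré series, the only factor tight against the top diagonal is $(1-q^{2(d-1)}t^2)^{-b_{d-1}}$, which after clearing the exterior factor $(1+q^{d-1}t)^{b_{d-1}}$ combines into a single diagonal pole $(1-q^{d-1}t)^{-b_{d-1}}$ of order $b_{d-1}$. This yields a quasi-polynomial of period dividing $2$ (from the $t^2$) and degree at most $b_{d-1}-1$, recovering the bound of Theorem \ref{Th1} and localizing the whole problem at these $b_{d-1}$ generators.

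The remaining, and genuinely hard, step is to prove that the leading coefficient does not vanish, i.e.\ that the pole order is exactly $b_{d-1}$ rather than smaller. This is where $\partial$ must be controlled: I must rule out that it cancels the products of the degree-$2(d-1)$ weight-two generators in the top line. The favourable structural fact I would exploit is that $\partial$ preserves weight and lowers homological degree by one, sending a weight-two generator to a sum of products of two weight-one generators of total degree $2d-3<2(d-1)$; this makes the relevant even generators cycles and reduces the problem to showing that their products are not boundaries. Concretely, I would fix one explicit surviving top class of small weight---the fundamental class $[M]\in H_d(B_1(M))=H_d(M)$, which lies in degree $\nu_1=d$---and show that multiplying it by the distinct monomials in the $b_{d-1}$ polynomial generators yields $\mathbb{Q}$-linearly independent classes in $H_{\nu_n}(B_n(M);\mathbb{Q})$ for all large $n$, that is, a free rank-one submodule over the polynomial ring $\mathrm{Sym}\,H_{d-1}(M;\mathbb{Q})$ on $b_{d-1}$ generators. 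Such a submodule has Hilbert series with a pole of order exactly $b_{d-1}$, forcing $\deg Q_M^{\nu_n}(n)=b_{d-1}-1=\dim H_{d-1}(M;\mathbb{Q})-1$. I expect this non-degeneracy---proving these products are genuinely nonzero and independent in homology rather than killed by $\partial$---to be the main obstacle, with the closed orientable hypotheses entering through Poincaré duality, which trivialises the orientation system $\mathbb{Q}^{w}$, identifies $H_{d-1}(M;\mathbb{Q})$ with $H_1(M;\mathbb{Q})$, and supplies the fundamental class $[M]$ that anchors the free action.
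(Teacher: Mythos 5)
Your outline has the same skeleton as the paper's proof---exhibit the explicit family $[M]\cdot\{\text{monomials in the }b_{d-1}\text{ polynomial weight-two generators}\}$ as a lower bound of polynomial growth $\binom{m+b_{d-1}-1}{b_{d-1}-1}$, then quote Theorem \ref{Th1} for the matching upper bound---but the step you yourself defer as ``the main obstacle,'' namely that these products give linearly independent classes in $H_{\nu_n}(B_n(M);\mathbb{Q})$, is precisely the entire content of the theorem, and you offer no argument for it. It is not a routine verification. In the full Chevalley--Eilenberg complex these monomials are cycles for a trivial reason (the homological differential only merges pairs of weight-one generators, so any monomial with at most one $V$-factor is killed by $\partial$), but they are entangled with boundaries: writing $v_d$ for the fundamental-class weight-one generator, $v_{d-1,i},w_{2d-2,i}$ for the weight-one and weight-two generators coming from $H_{d-1}(M;\mathbb{Q})$, and $w_{2d-1}$ for the top weight-two generator, one has $\partial(v_d\wedge v_d)=\pm w_{2d-1}$ and $\partial(v_d\wedge v_{d-1,i})=\pm w_{2d-2,i}$, hence $\partial\bigl(v_d^2v_{d-1,i}w^{m-1}\bigr)=\pm w_{2d-1}v_{d-1,i}w^{m-1}\pm 2\,v_dw_{2d-2,i}w^{m-1}$. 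So modulo boundaries your classes are identified (up to scalar) with classes involving $w_{2d-1}$, and nothing in your proposal rules out that such relations annihilate linear combinations of them. The paper's missing ingredient is exactly the device that settles this: the ideal $\Omega^{*,*}_{n-2}(M)\cdot(v_d^{2},w_{2d-1})$ is an acyclic subcomplex (Theorem \ref{Acyclic}), so one may pass to the reduced complex ${}^{r}\Omega^{*,*}_{n}(M)$ in which $v_d^2=0=w_{2d-1}$. There, for $n$ odd, the piece of bidegree $(\nu_n,\lfloor n/2\rfloor)$ is exactly $v_d\mbox{J}^{\lfloor n/2\rfloor}$, these elements are cocycles because $v_d^2=0$, and nothing can map onto them because the complex vanishes in weight $>\lfloor n/2\rfloor$; independence in (co)homology is then immediate, and the count gives degree exactly $k-1$.

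A secondary but real problem is that several of your chain-level claims are false in the full complex and cannot support the argument as stated. The slope of each generator is not at most $d-1$: the fundamental class $[M]$ is a weight-one generator of slope $d$, and the weight-two generator from $H_d$ has slope $d-\tfrac12$; since $d$ is even, $[M]$ is a \emph{polynomial} generator, so weight-$n$ monomials reach degree $dn>\nu_n$. Worse, your description of the top diagonal fails: monomials such as $[M]^{(d-1)b+1}v_0^{\,b}\,w^{m}$ (with $v_0$ the weight-one generator from $H_0$) have degree exactly $\nu_n$ with $b$ unbounded, so the coefficient of $q^{\nu_n}t^{n}$ in the Poincar\'e series of the full complex grows like $n^{b_{d-1}}$, not $n^{b_{d-1}-1}$; in particular the chain-level generating function does not ``recover the bound of Theorem \ref{Th1},'' and cannot, since that bound is a statement about homology, not chains. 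Both defects are cured by the same acyclic-subcomplex reduction: once $v_d^2=0$, a top-degree monomial must contain exactly one $v_d$ and otherwise only slope-$(d-1)$ generators, which is the structure you assumed. So the missing idea is the reduced complex; with it, both your generating-function picture and your non-degeneracy step turn into the paper's proof.
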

If $M$ is not closed then the homology group $H_{\nu_{n}}(B_{n}(M);\mathbb{Q})$ is vanish. In this case, we will focus on the homology group $H_{\nu_{n}-1}(B_{n}(M);\mathbb{Q}).$ 
\begin{theorem}\label{Th3}
Let $M$ be an orientable manifold of even dimension $d\geq 2.$ If $M$ is not closed and $H_{d-1}(M;\mathbb{Q})$ is non-trivial then the degree of quasi-polynomial $ Q_{M}^{\nu_{n}-1}(n)$ is dim$H_{d-1}(M;\mathbb{Q})-1.$ 
\end{theorem}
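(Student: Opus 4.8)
The plan is to run the whole argument inside a bigraded chain model as in \cite{KMT} (equivalently Knudsen's Chevalley--Eilenberg model, cf. \cite{Y}): for even $d$ the sum $\bigoplus_{n}H_{*}(B_{n}(M);\mathbb{Q})$ is the homology of the CE complex of the graded Lie algebra $\mathfrak{g}_{M}=H^{-*}_{c}(M;\mathbb{Q})\otimes\mathfrak{g}_{d}$, where $\mathfrak{g}_{d}$ is spanned by a class $w$ in degree $d-1$ and its self-bracket $[w,w]$ in degree $2d-2$, all higher brackets vanishing. I will bookkeep with two gradings, the weight $n$ (number of points) and the homological degree, where a generator $a\otimes w$ with $a\in H^{k}_{c}(M)$ has weight $1$ and degree $d-k$, and $a\otimes[w,w]$ has weight $2$ and degree $2d-1-k$ (the convention is fixed by the weight-$1$ part being $H_{*}(M)$). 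The key numerical quantity is the ratio (degree)$/$(weight), which I call the efficiency; since the degree of a monomial is the sum of the degrees of its factors, a weight-$n$ monomial has degree at most $(d-1)n$, with equality exactly when every factor has efficiency $d-1$.

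The first step is to pin down the top-efficiency generators. Because $M$ is orientable, Poincar\'e--Lefschetz duality gives $H^{k}_{c}(M;\mathbb{Q})\cong H_{d-k}(M;\mathbb{Q})$; in particular $H^{0}_{c}(M)\cong H_{d}(M)$ and $H^{1}_{c}(M)\cong H_{d-1}(M)$. Here $M$ is connected and not closed, so $H^{0}_{c}(M)=0$, and the generators of efficiency strictly larger than $d-1$ (which could come only from $H^{0}_{c}$) are absent. Solving $d-k=d-1$ and $(2d-1-k)/2=d-1$ shows that the generators of efficiency exactly $d-1$ are precisely the $r:=\dim H_{d-1}(M;\mathbb{Q})$ odd classes $x_{1},\dots,x_{r}$ of weight $1$ and degree $d-1$ from $H^{1}_{c}(M)\otimes w$, together with the $r$ even classes $y_{1},\dots,y_{r}$ of weight $2$ and degree $2d-2$ from $H^{1}_{c}(M)\otimes[w,w]$. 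Consequently the top nonzero degree in weight $n$ is $(d-1)n=\nu_{n}-1$ (so that $H_{\nu_{n}}(B_{n}(M))=0$, as noted in the text), and the degree-$(d-1)n$ chains are spanned exactly by the monomials $x_{S}\,y^{\mathbf{m}}$ with $S\subseteq\{1,\dots,r\}$, $\mathbf{m}\in\mathbb{N}^{r}$, and $|S|+2|\mathbf{m}|=n$.

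Now I compute $H_{\nu_{n}-1}$. Since no chains exist in degree $(d-1)n+1$, nothing maps into the top degree, so $H_{\nu_{n}-1}(B_{n}(M))$ is exactly the kernel of the CE differential on these top-degree chains. On a monomial $x_{S}\,y^{\mathbf{m}}$ the differential is a sum of brackets over pairs of factors; by graded antisymmetry and the Jacobi relation in $\mathfrak{g}_{d}$ one has $[x_{i},y_{j}]=0$ and $[y_{j},y_{j'}]=0$, so only the pairs $[x_{i},x_{i'}]=(a_{i}\cup a_{i'})\otimes[w,w]$ survive, and these strictly lower the efficiency. In particular every monomial with $|S|\le 1$ is a cycle, and since there are no boundaries in top degree each represents a nonzero, linearly independent homology class. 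Counting them by stars and bars gives $\binom{n/2+r-1}{r-1}$ classes when $n$ is even and $r\binom{(n-1)/2+r-1}{r-1}$ when $n$ is odd, a quasi-polynomial of period $2$ and degree $r-1$ in $n$. This yields the lower bound $\deg Q^{\nu_{n}-1}_{M}(n)\ge r-1$; the matching upper bound $\le r-1$ is Theorem \ref{Th1} with $\mathbb{Q}^{w}=\mathbb{Q}$ (and is in any case visible from the fact that the full top-degree chain space already has degree $r-1$). Hence the degree is exactly $r-1=\dim H_{d-1}(M;\mathbb{Q})-1$.

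I expect the work to be bookkeeping rather than a single hard idea: fixing the grading conventions in the CE model and the Poincar\'e--Lefschetz identifications correctly, and above all exploiting that $M$ is not closed. The hypothesis $H^{0}_{c}(M)=0$ is the crux and is used twice: it lowers the extremal degree from $\nu_{n}$ to $\nu_{n}-1$, and it guarantees that the explicit top-degree cycles are not boundaries, so the chain-level count descends to homology. The only place where a genuine computation might intrude is the behaviour of the cup product $H^{1}_{c}\times H^{1}_{c}\to H^{2}_{c}$ on monomials with $|S|\ge 2$; but since these terms only enlarge the kernel while the ambient chain space is already of degree $r-1$, they cannot change the degree, so this obstacle does not actually arise.
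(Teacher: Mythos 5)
Your proposal is correct and follows essentially the same route as the paper: both work in Knudsen's Chevalley--Eilenberg model, identify the top-degree ($(d-1)n = \nu_{n}-1$) chains as monomials in the degree-$(d-1)$ and degree-$(2d-2)$ generators coming from $H^{1}_{c}(M)\cong H_{d-1}(M;\mathbb{Q})$, observe that the relevant monomials are cycles that cannot be boundaries since nothing sits above the top degree, and combine the resulting stars-and-bars count (degree $k-1$ lower bound) with the KMT upper bound of Theorem \ref{Th1}. Your write-up is in fact slightly more careful than the paper's, since you treat both parities of $n$ and make explicit that $H^{0}_{c}(M)=0$ is what kills efficiency-$d$ generators, but the underlying argument is identical.
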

In light of main theorems, we formulate the following conjecture:
\begin{conjecture}\label{Con}
Let $M$ be an orientable manifold of even dimension $d\geq 2.$ If $H_{d-1}(M;\mathbb{Q})$ and $ Q_{M}^{\nu_{n}-i}(n)$ are non-trivial then the degree of quasi-polynomial $ Q_{M}^{\nu_{n}-i}(n)$ is dim$H_{d-1}(M;\mathbb{Q})-1$ for $i\geq0.$ 
\end{conjecture}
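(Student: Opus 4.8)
The plan is to prove Conjecture \ref{Con} by computing the bigraded Poincar\'e series of the unordered configuration spaces and isolating the \emph{extremal strand}. I would work with a free graded-commutative chain model for $\bigoplus_{n\ge 0} H_*(B_n(M);\mathbb{Q})$ of the type underlying the computation of Knudsen--Miller--Tosteson (Theorem \ref{Th1}) and of F\'elix--Thomas: a complex bigraded by homological degree and by number of points $n$, whose underlying algebra is free on two families of generators built from $H_*(M;\mathbb{Q})$. Writing $b_j=\dim H_j(M;\mathbb{Q})$ (the orientability hypothesis trivialises the local system $\mathbb{Q}^w$), there is a weight-one generator in homological degree $j$ for each class of $H_j(M)$, and a weight-two generator in homological degree $j+d-1$ for each class of $H_j(M)$; since $d$ is even the Koszul sign rule makes a generator polynomial or exterior according to the parity of its homological degree. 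The first step is to record how each generator contributes to $\mathcal{P}_M(q,t)=\sum_{n,i}\dim H_i(B_n(M);\mathbb{Q})\,q^i t^n$.

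The key observation is to identify which generators drive the growth of the extremal homology $H_{\nu_n-i}$, where $\nu_n=(d-1)n+1$. Among all generators, the maximal ratio of homological degree to weight realised by a \emph{repeatable} (even, hence polynomial) generator is exactly $d-1$, attained by the weight-two generators associated to $H_{d-1}(M)$: these sit in homological degree $2(d-1)$ and are even because $d-1$ is odd. There are precisely $b_{d-1}=\dim H_{d-1}(M;\mathbb{Q})$ of them, say $y_1,\dots,y_{b_{d-1}}$, and they span a polynomial subalgebra contributing the factor $(1-q^{2(d-1)}t^{2})^{-b_{d-1}}$. I would argue that, for each fixed $i$, a monomial of weight $n$ landing in homological degree $\nu_n-i$ is a product of the $y_l$ times a \emph{capping} factor of bounded weight: the only repeatable generator with ratio exceeding $d-1$ is the weight-one fundamental-class generator of $H_d(M)$ (ratio $d$), and for closed $M$ this is controlled by the differential, reflecting Poincar\'e duality; every other generator of ratio at most $d-1$ can enlarge the weight only by a bounded amount without dropping out of degree $\nu_n-i$. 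Consequently the extremal generating function factors as
\[
\sum_{n} \dim H_{\nu_n-i}(B_n(M);\mathbb{Q})\, t^{n}
 = \frac{C_i(t)}{(1-t^{2})^{\,b_{d-1}}},
\]
with $C_i(t)$ a polynomial recording the finite family of capping configurations. A pole of order $b_{d-1}$ at $t=\pm1$ yields a quasi-polynomial of period at most $2$ and degree at most $b_{d-1}-1$, recovering Theorem \ref{Th1}; equality holds precisely when $C_i(1)\neq0$ or $C_i(-1)\neq0$.

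The main obstacle --- and the reason this is stated as a conjecture --- is proving this non-vanishing whenever $Q_M^{\nu_n-i}(n)$ is non-trivial. This is a statement about the differential in the model: one must show that the leading family, namely the products $y_1^{a_1}\cdots y_{b_{d-1}}^{a_{b_{d-1}}}$ decorated by a fixed capping class of weight $O(i)$, survives to homology without being a boundary, so that no cancellation lowers the pole order below $b_{d-1}$. For $i=0$ with $M$ closed (Theorem \ref{Th2}) and $i=1$ with $M$ open (Theorem \ref{Th3}) the capping factor is essentially unique and the non-vanishing is transparent, which is what makes those cases accessible. For general $i$ I would attempt to exhibit these decorated products as explicit cycles and prove their linear independence in homology by pairing against a dual construction, using Poincar\'e--Lefschetz duality on $M$ together with the explicit form of the differential. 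Establishing that this leading family is never entirely annihilated, uniformly in $n$, is the crux; once it is in hand, reading off the degree $b_{d-1}-1$ from the pole order is routine.
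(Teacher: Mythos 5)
The statement you are addressing is stated in the paper as a conjecture: the paper contains no proof of it. The only evidence offered there is the remark that the surface computations of Drummond-Cole and Knudsen \cite{D-K} confirm it for $d=2$, together with the two special cases proved as Theorem \ref{Th2} ($i=0$, $M$ closed) and Theorem \ref{Th3} ($i=1$, $M$ open). Your setup is the correct one and is essentially the paper's own framework: the Chevalley--Eilenberg/F\'elix--Thomas model, the identification of the polynomial generators of bidegree $(2(d-1),2)$ coming from $H_{d-1}(M;\mathbb{Q})$ (the paper's $w_{2d-2,1},\dots,w_{2d-2,k}$) as the engine of growth, the role of the fundamental-class generator $v_d$ being cut down in the reduced complex, and the reduction of the degree count to the non-vanishing $C_i(\pm 1)\neq 0$ of a ``leading family'' of monomials times a bounded capping factor. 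Your pole-order bookkeeping correctly recovers the upper bound of Theorem \ref{Th1}.

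However, your proposal is not a proof, and you say so yourself: the crucial step --- showing that for every $i$ with $Q_{M}^{\nu_{n}-i}(n)$ non-trivial, some decorated leading monomials survive in homology uniformly in $n$ --- is exactly what you defer to future work. That step is not routine; it is the entire content of the conjecture. For general $i$ the space of capping factors is large, the differential mixes them, and a priori the whole strand in degree $\nu_{n}-i$ could consist of boundaries; the paper's Theorem \ref{Acyclic} is precisely an example of a large chain-level family (everything divisible by $v_{d}^{2}$ or $w_{2d-1}$) contributing nothing to homology. There is also a second gap your sketch glosses over: you must rule out the scenario in which $H_{\nu_{n}-i}(B_{n}(M);\mathbb{Q})$ is non-zero for all large $n$ yet grows like a quasi-polynomial of degree strictly less than $\dim H_{d-1}(M;\mathbb{Q})-1$, i.e. non-triviality of $Q_{M}^{\nu_{n}-i}(n)$ must be shown to force survival of the specific top family, not merely of some classes in that degree. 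In the accessible cases $i=0,1$ this is transparent because the extremal piece of the complex is spanned outright by $v_{d}\mbox{J}^{\lfloor n/2\rfloor}$ (resp. $\mbox{I}\mbox{J}^{\lfloor n/2\rfloor}$) and the relevant differentials vanish; for general $i$ no such explicit description is available. As written, your argument establishes only the upper bound (already Theorem \ref{Th1}) and re-derives the two cases already proven in the paper, so the conjecture remains open.
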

\begin{remark}
Drummond-Cole and Knudsen \cite{D-K} computed the all Betti numbers of configuration spaces of surfaces. From their computations, we see that the Conjecture \ref{Con} is true for surfaces.
\end{remark}

\subsection*{Notations}

$\bullet$ We work throughout with finite dimensional graded vector spaces. The degree of an element $v$ is written $|v|$.\\\\
$\bullet$ The symmetric algebra $Sym(V^{*})$ is the tensor product of a polynomial algebra and an exterior algebra:
$$ Sym(V^{*})=\bigoplus_{k\geq0}Sym^{k}(V^{*})=Poly(V^{even})\bigotimes Ext(V^{odd}), $$
where $Sym^{k}$ is generated by the monomials of length $k.$\\\\
$\bullet$ The $n$-th suspension of the graded vector space $V$ is the graded vector space $V[n]$ with
$V[n]_{i} = V_{i-n},$ and the element of $V[n]$ corresponding to $a\in V$ is denoted $s^{n}a;$ for example
$$ H^{*}(S^{2};\mathbb{Q})[n] =\begin{cases}
      \mathbb{Q}, & \text{if $*\in\{n,n+2 \}$} \\
      0, & \mbox{otherwise}.\\
   \end{cases} $$ \\\\
$\bullet$ We write $H_{-*}(M;\mathbb{Q})$ for the graded vector space whose degree $-i$ part is
the $i$-th homology group of $M;$ for example
$$ H^{-*}(\text{CP}^m;\mathbb{Q}) =\begin{cases}
      \mathbb{Q}, & \text{if $*\in\{-2m,-2m+2,\ldots,0. \}$} \\
      0, & \mbox{otherwise}.\\
   \end{cases} $$


\section{Chevalley–Eilenberg complex}
F\'{e}lix--Thomas \cite{F-Th} (see also \cite{F-Ta}) constructed the model for rational cohomology of unordered configuration spaces of closed orientable even dimension manifolds. More recently, the identification was established in full generality by the Knudsen in \cite{Kn}. We will restrict our attention to the case
of orientable even dimensional manifolds.

Let us introduced some notations. Consider two graded vector spaces $$V^{*}=H^{-*}_{c}(M;\mathbb{Q})[d],\quad W^{*}=H_{c}^{-*}(M;\mathbb{Q})[2d-1]:$$
where
$$ V^{*}=\bigoplus_{i=0}^{d}V^{i},\quad W^{*}=\bigoplus_{j=d-1}^{2d-1}W^{j}.$$
We choose bases in $V^{i}$ and $W^{j}$ as 
$$V^{i}=\mathbb{Q}\langle v_{i,1},v_{i,2},\ldots\rangle,\quad W^{j}=\mathbb{Q}\langle w_{j,1},w_{j,2},\ldots\rangle$$
(the degree of an element is marked by the first lower index; $x_{i}^{l}$ stands for the product $x_{i}\wedge x_{i}\wedge\ldots\wedge x_{i}$ of $l$-factors). Always we take $V^{0}=\mathbb{Q}\langle v_{0}\rangle$. Now consider the graded algebra
$$ \Omega^{*,*}_{n}(M)=\bigoplus_{i\geq 0}\bigoplus_{\omega=0}^{\left\lfloor\frac{n}{2}\right\rfloor}
\Omega^{i,\omega}_{n}(M)=\bigoplus_{\omega=0}^{\left\lfloor\frac{n}{2}\right\rfloor}\,(Sym^{n-2\omega}(V^{*})\otimes Sym^{\omega}(W^{*})) $$
where the total degree $i$ is given by the grading of $V^{*}$ and $W^{*}$. We called $\omega$ is a weight grading. The differential $\partial:Sym^{2}(V^{*})\rightarrow W^{*}$ is defined as a coderivation by the equation 
$$\partial(s^{d}a\wedge s^{d}b)=(-1)^{(d-1)|b|}s^{2d-1}(a\cup b),$$ where $$\cup\,:H^{-*}_{c}(M;\mathbb{Q})^{\otimes2}\rightarrow H^{-*}_{c}(M;\mathbb{Q})$$
(here $H^{-*}_{c}$ denotes compactly supported cohomology of $M$). The degree of $\partial$ is $-1.$ It can be easily seen that $s^{d}a,\,s^{d}b\in V^{*}$ and $s^{2d-1}(a\cup b)\in W^{*
}.$ The differential $\partial$ extends over  $\Omega^{*,*}_{n}(M)$ by co-Leibniz rule. By definition the elements in $V^{*}$ have length 1 and weight 0 and the elements in $W^{*}$ have length 2 and weight 1. By definition of differential, we have 
$$\partial:\Omega^{*,*}_{n}(M)\longrightarrow\Omega^{*-1,*+1}_{n}(M).$$

\begin{theorem}
	If $d$ is even, $H_{*}(B_{n}(M);\mathbb{Q})$  is isomorphic to the homology of the complex 
	$$ (\Omega^{*,*}_{n}(M),\partial).$$
\end{theorem}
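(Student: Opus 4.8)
The plan is to identify $(\Omega^{*,*}_{n}(M),\partial)$ with the weight-$n$ summand of the Chevalley--Eilenberg complex of an explicit graded Lie algebra attached to $M$, and then to invoke the general factorization-homology computation of Knudsen \cite{Kn} (equivalently the F\'elix--Thomas model \cite{F-Th,F-Ta}), which expresses $\bigoplus_{n}H_{*}(B_{n}(M);\mathbb{Q})$ as the Lie algebra homology of that Lie algebra. First I would write down the Lie algebra $\mathfrak{g}_{M}=H^{-*}_{c}(M;\mathbb{Q})\otimes\mathfrak{w}$, where $\mathfrak{w}$ is the free graded Lie algebra on a single generator $x$ of degree $d-1$. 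Since $d$ is even, $|x|=d-1$ is odd, so the graded Jacobi identity forces $[[x,x],x]=0$; hence $\mathfrak{w}$ is two-dimensional, spanned by $x$ and the self-bracket $[x,x]$ of degree $2d-2$. The bracket on $\mathfrak{g}_{M}$ is then determined by the cup product on $H^{-*}_{c}(M;\mathbb{Q})$ together with the single nonzero bracket $[x,x]$.

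Next I would compare the underlying bigraded vector spaces. The Chevalley--Eilenberg chains of $\mathfrak{g}_{M}$ are $Sym(\mathfrak{g}_{M}[1])$, and the shift sends the generator part to $H^{-*}_{c}(M;\mathbb{Q})[d]=V^{*}$ and the bracket part to $H^{-*}_{c}(M;\mathbb{Q})[2d-1]=W^{*}$. Declaring the weight $\omega$ to be the number of tensor factors coming from $W^{*}$ (each such factor being a bracket of two generators, hence accounting for two points), an element of $Sym^{a}(V^{*})\otimes Sym^{\omega}(W^{*})$ represents $a+2\omega$ points; setting $a=n-2\omega$ recovers exactly the summand $Sym^{n-2\omega}(V^{*})\otimes Sym^{\omega}(W^{*})$ defining $\Omega^{*,*}_{n}(M)$. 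This is precisely the length/weight bookkeeping declared in the construction: elements of $V^{*}$ have length $1$ and weight $0$, while elements of $W^{*}$ have length $2$ and weight $1$.

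It then remains to match the differentials. The Chevalley--Eilenberg differential is the coderivation extending the dual of the bracket $\mathfrak{g}_{M}\wedge\mathfrak{g}_{M}\to\mathfrak{g}_{M}$; on chains this is a map $Sym^{2}(V^{*})\to W^{*}$ extended by the co-Leibniz rule, precisely the shape of $\partial$. Evaluating the bracket on two generators $a\otimes x$ and $b\otimes x$ produces $(a\cup b)\otimes[x,x]$, which under the suspension isomorphisms becomes $s^{2d-1}(a\cup b)$ up to the Koszul sign incurred in commuting the shift $s^{d}$ past $b$; I would check that this sign is exactly $(-1)^{(d-1)|b|}$, matching the definition of $\partial$. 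Since the bracket lands only in weight-one elements and the co-Leibniz extension raises weight by one while dropping total degree by one, this agrees with $\partial:\Omega^{*,*}_{n}(M)\to\Omega^{*-1,*+1}_{n}(M)$, completing the identification of complexes.

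The main obstacle I expect is twofold: first, verifying the sign conventions so that the Koszul signs produced by the suspensions and by the graded-commutativity of $Sym$ assemble precisely into the factor $(-1)^{(d-1)|b|}$; and second, making the identification of the abstract bracket with the cup product on compactly supported cohomology precise, which rests on Poincar\'e duality and on the operadic input of \cite{Kn} that the relevant part of $H_{*}(E_{d})$ for $d$ even reduces to the free Lie algebra on one odd generator. Granting these, the isomorphism of bigraded complexes is immediate, and the theorem follows by passing to homology using the computation of \cite{Kn}.
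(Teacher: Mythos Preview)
The paper does not prove this theorem at all: it is stated as a known background result, attributed to F\'elix--Thomas \cite{F-Th} (see also \cite{F-Ta}) for closed orientable manifolds and to Knudsen \cite{Kn} in full generality, and no argument is given in the text. Your proposal is a correct sketch of the argument underlying those references---in particular, the identification of $(\Omega^{*,*}_{n}(M),\partial)$ with the weight-$n$ piece of the Chevalley--Eilenberg complex of $H^{-*}_{c}(M;\mathbb{Q})\otimes\mathfrak{w}$ and the appeal to the factorization-homology computation of \cite{Kn}---so there is nothing to compare beyond noting that you have unpacked what the paper simply cites.
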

For a closed manifold, the compactly supported cohomology is the ordinary cohomology. In this case the two graded vector spaces are $$V^{*}=H_{-*}(M;\mathbb{Q})[d],\quad W_{*}=H_{-*}(M;\mathbb{Q})[2d-1].$$
Now, we will define the dual complex of $(\Omega^{*,*}_{n}(M),\partial).$ First, we define a dual differential $D$ on $\Omega^{*,*}_{n}(M).$ The dual differential is defined as $$D|_{V^{*}}=0,\quad D|_{W^{*}}:\,W^{*}\simeq H_{*}(M;\mathbb{Q})\xrightarrow{\Delta} Sym^{2}(V^{*})\simeq  Sym^{2}(H_{*}(M;\mathbb{Q})),$$
where
$\Delta$ is a diagonal comultiplication corresponding to cup product. By definition of differential, we have 
$$D:\Omega^{*,*}_{n}(M)\longrightarrow\Omega^{*+1,*-1}_{n}(M).$$
\begin{theorem}
	If $d$ is even and $M$ is closed, then $H^{*}(B_{n}(M);\mathbb{Q})$  is isomorphic to the cohomology of the complex 
	$$ (\Omega^{*,*}_{n}(M),D).$$
\end{theorem}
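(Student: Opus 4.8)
The plan is to derive this cohomological statement from the homological one (the preceding theorem) by linear duality over the field $\mathbb{Q}$ combined with Poincaré duality on the closed orientable manifold $M$. Over a field, cohomology is the degreewise linear dual of homology, so $H^{i}(B_{n}(M);\mathbb{Q})\cong \mathrm{Hom}_{\mathbb{Q}}(H_{i}(B_{n}(M);\mathbb{Q}),\mathbb{Q})$, and the cohomology of the dual cochain complex $(\Omega^{*,*}_{n}(M)^{\vee},\partial^{\vee})$ computes exactly this dual. By the preceding theorem $(\Omega^{*,*}_{n}(M),\partial)$ computes $H_{*}(B_{n}(M);\mathbb{Q})$, hence $(\Omega^{*,*}_{n}(M)^{\vee},\partial^{\vee})$ computes $H^{*}(B_{n}(M);\mathbb{Q})$. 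The entire content of the theorem is therefore to identify the complex $(\Omega^{*,*}_{n}(M),D)$ with this dual complex $(\Omega^{*,*}_{n}(M)^{\vee},\partial^{\vee})$.

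To produce that identification I would use Poincaré duality on $M$ to build a nondegenerate pairing on $\Omega^{*,*}_{n}(M)$. Since $M$ is closed and orientable of dimension $d$, the intersection pairing gives a perfect pairing of $H_{k}(M;\mathbb{Q})$ with $H_{d-k}(M;\mathbb{Q})$, hence a self-duality of $H_{*}(M;\mathbb{Q})$ up to the degree shift imposed by the fundamental class. After the suspensions $[d]$ and $[2d-1]$ this descends to nondegenerate pairings on $V^{*}$ and on $W^{*}$, which I extend multiplicatively to each factor $Sym^{n-2\omega}(V^{*})\otimes Sym^{\omega}(W^{*})$ and hence to all of $\Omega^{*,*}_{n}(M)$, compatibly with the weight grading $\omega$. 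The key compatibility to verify is that, under this pairing, the map $D|_{W^{*}}$, which is the diagonal comultiplication $\Delta$ corresponding to the cup product, is precisely the adjoint of $\partial|_{Sym^{2}(V^{*})}$, which is the cup product itself; this is nothing but the statement that the coproduct $\Delta$ on $H_{*}(M)$ is dual to the product $\cup$ on $H^{*}(M)$ under Poincaré duality.

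Finally I would propagate this adjointness from the generators to the whole complex. Here I use the paper's convention that $\partial$ is extended by the co-Leibniz rule, together with the standard fact that linear dualization interchanges a coderivation extension with the derivation extension of the transpose map; this yields $D=\pm\,\partial^{\vee}$ on all of $\Omega^{*,*}_{n}(M)$. Consequently $(\Omega^{*,*}_{n}(M),D)\cong(\Omega^{*,*}_{n}(M)^{\vee},\partial^{\vee})$ as cochain complexes, and the computation above identifies its cohomology with $H^{*}(B_{n}(M);\mathbb{Q})$.

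I expect the genuine difficulty to be entirely in the bookkeeping of gradings and signs rather than in any conceptual step: one must track the suspension shifts $[d]$ and $[2d-1]$ together with the sign $(-1)^{(d-1)|b|}$ appearing in $\partial$, and confirm that the degree of the Poincaré pairing is exactly the one that matches cohomological degree $i$ of $(\Omega^{*,*}_{n}(M),D)$ with the dual of homological degree $i$ of $(\Omega^{*,*}_{n}(M),\partial)$. A secondary point worth stating explicitly is the nondegeneracy of the induced pairing on symmetric powers, which over $\mathbb{Q}$ holds because a perfect pairing on a graded vector space induces perfect pairings on all of its symmetric and exterior powers.
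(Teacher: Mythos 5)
Your high-level plan --- deduce the cohomological statement by dualizing the homological one over $\mathbb{Q}$ --- is the right one, and it matches the paper's framing: the paper offers no proof of this theorem (it is the F\'elix--Thomas/Knudsen identification, quoted from the literature), but it explicitly introduces $(\Omega^{*,*}_{n}(M),D)$ as ``the dual complex'' of $(\Omega^{*,*}_{n}(M),\partial)$, with $\Delta$ the comultiplication corresponding to cup product. The step of yours that fails is the identification of $(\Omega^{*,*}_{n}(M),D)$ with that dual complex \emph{via Poincar\'e duality}. The intersection pairing pairs $H_{k}(M)$ with $H_{d-k}(M)$, i.e.\ it is a self-pairing of degree $-d$; after the suspensions it pairs $V^{i}$ with $V^{d-i}$ and $W^{j}$ with $W^{3d-2-j}$, and its multiplicative extension pairs $\Omega^{a,\omega}_{n}$ with $\Omega^{b,\omega}_{n}$ only when $a+b=nd+(d-2)\omega$ (so for $d\neq 2$ it does not even have a uniform degree across weights). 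With respect to such a pairing the adjointness you need, $\langle Dx,y\rangle=\pm\langle x,\partial y\rangle$, is impossible for degree reasons: the left side can be nonzero only when $\deg x+\deg y=nd+(d-2)(\omega-1)-1$, while the right side requires $\deg x+\deg y=nd+(d-2)\omega+1$, and these two conditions differ by $d>0$. Equivalently, the adjoint of $\partial$ under the intersection-induced pairing has bidegree $(1-d,-1)$, not the bidegree $(1,-1)$ of $D$. What a self-pairing of this kind produces is a degree-shifted duality (a Poincar\'e--Lefschetz type statement relating degree $i$ to a complementary degree), not the degree-preserving universal-coefficients identification the theorem asserts.

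The correct pairing is the Kronecker (evaluation) pairing, and Poincar\'e duality is not needed anywhere. For closed $M$ one has $H_{c}^{-*}(M)=H^{-*}(M)$, so the chain model is built from $V=H^{-*}(M)[d]$, $W=H^{-*}(M)[2d-1]$, while the cochain model is built from $V'=H_{-*}(M)[d]$, $W'=H_{-*}(M)[2d-1]$. The evaluation pairing $H_{k}(M)\otimes H^{k}(M)\to\mathbb{Q}$ identifies $V'^{i}\cong(V^{i})^{*}$ and $W'^{j}\cong(W^{j})^{*}$ degreewise, hence $Sym^{n-2\omega}(V')\otimes Sym^{\omega}(W')\cong\bigl(Sym^{n-2\omega}(V)\otimes Sym^{\omega}(W)\bigr)^{*}$ compatibly with both gradings (finite dimensionality over $\mathbb{Q}$ is all that is used). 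Under this identification $\Delta$ is the transpose of $\cup$ \emph{by definition} --- that is exactly what ``diagonal comultiplication corresponding to cup product'' means; your sentence attributing this duality to Poincar\'e duality conflates the Kronecker pairing with the intersection pairing --- and the coderivation extension of $\partial$ transposes to the derivation extension $D$, so $(\Omega^{*,*}_{n}(M),D)$ is the degreewise linear dual of $(\Omega^{*,*}_{n}(M),\partial)$. Then, over a field, cohomology of the dual complex is the dual of homology, so the homological theorem together with universal coefficients for $B_{n}(M)$ gives $H^{i}(\Omega^{*,*}_{n}(M),D)\cong(H_{i}(B_{n}(M);\mathbb{Q}))^{*}\cong H^{i}(B_{n}(M);\mathbb{Q})$. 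The remaining ingredients of your outline (transpose exchanging derivations and coderivations, dual of $Sym$ equals $Sym$ of dual) are fine once the pairing is corrected.
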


\section{Reduced Chevalley–Eilenberg complex}
In this section, we define an acyclic subcomplex of $(\Omega^{*,*}_{n}(M),D).$ 
\begin{theorem}\label{Acyclic}
Let $M$ be a closed orientable manifold of dimension $d.$ The subspace $$\Omega_{n-2}^{*,*}(M).(v_{d}^{2}, w_{2d-1})< \Omega_{n}^{*,*}(M)$$ is acyclic for $n\geq 2.$
\end{theorem}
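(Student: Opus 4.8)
The plan is to show acyclicity of the subcomplex generated by the pair of classes $v_d^2$ and $w_{2d-1}$ by exhibiting an explicit contracting homotopy (or equivalently decomposing the subcomplex into a direct sum of two-term acyclic pieces). The key observation is that $v_d$ is the top class (a generator of $V^d = H_0(M;\mathbb{Q})[d]$, coming from a point), and the differential $D$ sends $w_{2d-1}$ into $\mathrm{Sym}^2(V^*)$ via the diagonal comultiplication dual to the cup product. Since $v_d$ corresponds to the fundamental class under Poincaré duality on a closed orientable $M$, I expect $D(w_{2d-1})$ to contain the term $v_0 \cdot v_d$ (the pairing of the fundamental class against the unit), so that $D$ relates $w_{2d-1}$ to a monomial that is divisible by $v_d$.

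First I would make precise what the cyclic subspace $\Omega_{n-2}^{*,*}(M).(v_d^2, w_{2d-1})$ means: it is the ideal-like submodule consisting of elements of $\Omega_n^{*,*}(M)$ of the form $\alpha \cdot v_d^2 + \beta \cdot w_{2d-1}$ where $\alpha,\beta$ range over $\Omega_{n-2}^{*,*}(M)$. Note that $v_d^2$ has length $2$ and weight $0$, while $w_{2d-1}$ has length $2$ and weight $1$, so both factors shift the number of points by $2$, which is why the base is $\Omega_{n-2}$. I would then verify that this subspace is closed under $D$: since $D$ kills $V^*$ it kills $v_d^2$, and $D(w_{2d-1}) = \Delta(w_{2d-1}) \in \mathrm{Sym}^2(V^*)$, so I must check that $\Delta(w_{2d-1})$ lands inside the submodule, i.e. that it is a multiple of $v_d^2$ (up to the identification). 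This is where the closedness and orientability of $M$ enter: the only way $w_{2d-1}$, dual to the fundamental class, can map under the cup-coproduct is to $v_0 \cdot v_d$ type terms, and after accounting for the top class I expect $D(w_{2d-1}) = \pm v_d^2$ or a scalar multiple thereof up to lower-filtration corrections.

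The core of the argument is then to define a homotopy $h$ with $Dh + hD = \mathrm{id}$ on the subcomplex. The natural candidate sends the monomial $\alpha \cdot v_d^2$ to $\pm \alpha \cdot w_{2d-1}$ (dividing out one factor and trading a weight-$0$ length-$2$ piece for the weight-$1$ generator), and sends $\alpha \cdot w_{2d-1}$ to $0$. One then computes $Dh + hD$ on each of the two types of generators and checks it is the identity, using $D(w_{2d-1}) = c\, v_d^2$ with $c \neq 0$. Equivalently, I would organize $\Omega_{n-2}^{*,*}(M).(v_d^2, w_{2d-1})$ as a tensor product of the base complex with the two-dimensional acyclic complex $\mathbb{Q}\langle v_d^2\rangle \xleftarrow{\;\cong\;} \mathbb{Q}\langle w_{2d-1}\rangle$ and invoke the Künneth theorem, so that acyclicity is immediate once the two-term piece is shown acyclic. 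The condition $n \geq 2$ is exactly what is needed for the weight-$1$ generator $w_{2d-1}$ to be available inside $\Omega_n^{*,*}(M)$, i.e. for $\lfloor n/2 \rfloor \geq 1$.

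The main obstacle I anticipate is the bookkeeping of signs and of the interaction between the symmetric-algebra multiplication and the coderivation $D$: I must confirm that multiplying the base element $\alpha$ by $v_d$ or $w_{2d-1}$ commutes with $D$ up to the predicted signs, and in particular that no extra terms arise from $D$ acting on the other factors of $w_{2d-1}$ inside a longer monomial (since $D$ is extended as a coderivation, not simply multiplicatively). Making the claim ``$D(w_{2d-1})$ is a nonzero multiple of $v_d^2$'' rigorous requires a careful unwinding of the diagonal comultiplication $\Delta$ dual to the cup product on $H_{-*}(M;\mathbb{Q})$, using Poincaré duality to identify the fundamental-class dual with $v_d$; this sign-and-pairing computation, rather than the homotopy formalism itself, is the delicate step.
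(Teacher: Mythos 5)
Your proposal is correct and follows essentially the same route as the paper: the paper also proves acyclicity by writing each element of the subspace uniquely as $v_{d}^{2}A+Bw_{2d-1}$ with $A,B$ containing no $w_{2d-1}$ and exhibiting an explicit contracting homotopy that trades $v_{d}^{2}$ for $w_{2d-1}$, resting on the computation $D(w_{2d-1})=v_{d}^{2}$ (note that since $w_{2d-1}$ is the point class, $\Delta(w_{2d-1})$ is exactly $v_{d}\cdot v_{d}$ --- there is no $v_{0}\cdot v_{d}$ term and no ``lower-filtration corrections,'' so that part of your reasoning should be simplified rather than made more elaborate). If anything, your homotopy ($\alpha v_{d}^{2}\mapsto\pm\alpha w_{2d-1}$, $\alpha w_{2d-1}\mapsto 0$) is the correct form: it has degree $-1$ as required for $Dh+hD=\mathrm{id}$ with the cohomological differential $D$, whereas the paper's printed formula $h(v_{d}^{2}A+Bw_{2d-1})=Bv_{d}^{2}$ has degree $+1$ and gives $(Dh+hD)(v_{d}^{2}A)=0$, so it is evidently a misprint for your version $h(v_{d}^{2}A+Bw_{2d-1})=\pm Aw_{2d-1}$; your only loose end is that the K\"unneth packaging must use as base the subcomplex of $w_{2d-1}$-free elements of $\Omega_{n-2}^{*,*}(M)$ (not all of $\Omega_{n-2}^{*,*}(M)$, over which the representation $\alpha v_{d}^{2}+\beta w_{2d-1}$ is not unique).
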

\begin{proof}
Let $M$ is closed and orientable. An element in $\Omega_{n-2}^{*,*}(M).(v_{d}^{2}, w_{2d-1})$ has a unique expansion $v_{d}^{2}A+Bw_{2d-1},$ where $A$ and $B$ have no monomial containing $w_{2d-1}.$ The operator $$h(v_{d}^{2}A+Bw_{2d-1})=Bv_{d}^{2}$$ gives a homotopy $id\simeq 0.$

\end{proof}
We denote the reduced complex $(\Omega_{n}^{*,*}(M)/\Omega_{n-2}^{*,*}(M).(v_{d}^{2}, w_{2d-1}),D_{\text{induced}})$ by 
$$({}^{r}\Omega_{n}^{*,*}(M),D).$$
\begin{corollary}\label{reduced}
If $n\geq2$ and $M$ is closed orientable, then we have an isomorphism $$H^{*}({}^{r}\Omega_{n}^{*,*}(M),D)\cong H^{*}(B_{n}(M)).$$
\end{corollary}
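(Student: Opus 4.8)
The plan is to deduce the corollary formally from two facts already in hand: the model theorem of the previous section, which gives $H^{*}(B_{n}(M);\mathbb{Q})\cong H^{*}(\Omega^{*,*}_{n}(M),D)$ for closed $M$ with $d$ even, and the acyclicity statement of Theorem \ref{Acyclic}. The underlying principle is the elementary homological fact that passing to the quotient of a cochain complex by an acyclic subcomplex induces an isomorphism on cohomology; the entire content of the corollary is an application of this principle.

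Concretely, I would first record the short exact sequence of cochain complexes
$$0\longrightarrow \Omega_{n-2}^{*,*}(M).(v_{d}^{2},w_{2d-1})\longrightarrow \Omega_{n}^{*,*}(M)\longrightarrow {}^{r}\Omega_{n}^{*,*}(M)\longrightarrow 0,$$
in which the left map is the inclusion and the right map is the quotient projection defining ${}^{r}\Omega_{n}^{*,*}(M)$. For this to be a sequence of complexes one needs the subspace to be closed under $D$, which holds because $D$ kills $v_{d}$ and sends $w_{2d-1}$ into the line spanned by $v_{d}^{2}$ (this is exactly the diagonal comultiplication applied to the point class, since $W^{2d-1}\cong H_{0}(M)$ and $V^{d}\cong H_{0}(M)=\mathbb{Q}\langle v_{d}\rangle$). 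Consequently, applying $D$ to a general element $v_{d}^{2}A+Bw_{2d-1}$ keeps both summands inside $\Omega_{n-2}^{*,*}(M).(v_{d}^{2},w_{2d-1})$, and the induced differential on the quotient is well defined.

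Next I would invoke the long exact sequence in cohomology associated to this short exact sequence. Since Theorem \ref{Acyclic} asserts that the subcomplex is acyclic for $n\geq 2$, all of its cohomology groups vanish, so the connecting maps force the projection to induce an isomorphism $H^{*}(\Omega_{n}^{*,*}(M),D)\cong H^{*}({}^{r}\Omega_{n}^{*,*}(M),D)$. Composing with the model isomorphism $H^{*}(B_{n}(M);\mathbb{Q})\cong H^{*}(\Omega^{*,*}_{n}(M),D)$ then yields the claim. I do not expect a serious obstacle here: the statement is purely formal once Theorem \ref{Acyclic} is granted, and the only point that merits a line of verification is the subcomplex property used to form the quotient, which I have indicated above.
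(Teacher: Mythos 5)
Your proposal is correct and is precisely the argument the paper intends: the corollary follows from Theorem \ref{Acyclic} together with the F\'elix--Thomas/Knudsen model theorem, via the standard fact that killing an acyclic subcomplex does not change cohomology. Your additional verification that $\Omega_{n-2}^{*,*}(M).(v_{d}^{2},w_{2d-1})$ is actually closed under $D$ (using $D(v_{d})=0$ and $D(w_{2d-1})\in\mathbb{Q}\langle v_{d}^{2}\rangle$) is a point the paper leaves implicit, and it is a worthwhile line to include.
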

\begin{remark}
If $M$ is not closed then the subspace $\Omega_{n-2}^{*,*}(M).(v_{d}^{2}, w_{2d-1})$ is vanish. 
\end{remark}

\section{Proof of Theorem \ref{Th2}}
In this section, we give the proof of Theorem \ref{Th2}. \\\\
\textit{Proof of Theorem \ref{Th2}.}
Let $M$ be a closed orientable. Assume $H_{d-1}(M;\mathbb{Q})$ is non-trivial and $H_{d-1}(M;\mathbb{Q})=k.$ The corresponding two vector spaces are following 
$$V^{*}=\oplus_{i=0}^{d}V^{i},\quad W^{*}=\oplus_{i=d-1}^{2d-1}W^{i}$$
where 
$$W^{2d-2}=\langle w_{2d-2,1},\ldots, w_{2d-2,k}\rangle,\quad V^{d-1}=\langle v_{d-1,1}\ldots,v_{d-1,k-1}\rangle.$$
There is no element of degree greater than $\nu_{n}$ in reduced complex. Therefore, we just focus on the degree $\nu_{n}.$ We will use the notation $\mbox{J}=\langle w_{2d-2,1},\ldots, w_{2d-2,k}\rangle.$
Let $n$ be odd. We have 
$${}^{r}\Omega^{\nu_{n},\lfloor\frac{n}{2}\rfloor}_{n}(M)=v_{d}\mbox{J}^{\lfloor\frac{n}{2}\rfloor}.$$
The cardinality of the bases elements of $\mbox{J}^{l}$ is $\binom{l+k-1}{k-1}.$ The cardinality of the bases elements of ${}^{r}\Omega^{\nu_{n},\lfloor\frac{n}{2}\rfloor}_{n}(M)$ is $\binom{\frac{n-2}{2}+k-1}{k-1}.$ Moreover, the differential of each bases element of $\mbox{J}$ is $$D(w_{2d-2,i})=v_{d}v_{d-1,i}.$$  We have $$D(v_{d}w_{2d-2,i})=0,\quad\mbox{for }i\in\{1,\ldots,k-1\}.$$ Note that $v_{d}^{\geq2}=0$ in reduced complex. Also, ${}^{r}\Omega^{*,j>\lfloor\frac{n}{2}\rfloor}_{n}(M)=0.$ The differential has bi-degree $(1,-1).$ Therefore each $v_{d}w_{2d-2,i}$ gives a cohomology class. We can write 
$$q^{\nu_{n}}_{M}(n)=\binom{\frac{n-2}{2}+k-1}{k-1}+\overline{q}_{M}^{\nu_{n}}(n).$$
where $\overline{q}_{M}^{\nu_{n}}(n)$ is a polynomial in $n.$ We can write 
$$\binom{\frac{n-2}{2}+k-1}{k-1}=\dfrac{(\frac{n-2}{2}+1)(\frac{n-2}{2}+2))\ldots(\frac{n-2}{2}+k-1)}{(k-1)!}.$$

This implies that the degree of $q^{\nu_{n}}_{M}(n)$ is at least $k-1.$ From Theorem \ref{Th1}, the degree of quasi-polynomial $Q_{M}^{\nu_{n}}$ is at most $k-1.$ Hence the degree of $Q_{M}^{\nu_{n}}$ is $k-1.$ $\hfill \square$\\

\section{Proof of Theorem \ref{Th3}}
In this section, we give the proof of Theorem \ref{Th3}. \\\\
\textit{Proof of Theorem \ref{Th3}.}  Let $M$ be a orientable manifold. Assume $M$ is not closed and $H_{d-1}(M;\mathbb{Q})$ is non-trivial. Furthermore, $H_{d-1}(M;\mathbb{Q})=k.$ The corresponding two vector spaces are following 
$$V^{*}=\oplus_{i=0}^{d}V^{i},\quad W^{*}=\oplus_{i=d-1}^{2d-1}W^{i}$$
where 
$$W^{2d-2}=\langle w_{2d-2,1},\ldots, w_{2d-2,k}\rangle,\quad V^{d-1}=\langle v_{d-1,1}\ldots,v_{d-1,k-1}\rangle.$$
There is no element of degree greater than $\nu_{n}-1$ in complex. Therefore, we just focus on the degree $\nu_{n}-1.$ We will use the notation 
$$\mbox{I}=\langle v_{d-1,1},\ldots, v_{d-1,k}\rangle, \quad\mbox{J}=\langle w_{2d-2,1},\ldots, w_{2d-2,k}\rangle.$$
Let $n$ be odd. We have 
$${}^{r}\Omega^{\nu_{n}-1,\lfloor\frac{n}{2}\rfloor}_{n}(M)=\mbox{I}\mbox{J}^{\lfloor\frac{n}{2}\rfloor}.$$
 The cardinality of the bases elements of ${}^{r}\Omega^{\nu_{n}-1,\lfloor\frac{n}{2}\rfloor}_{n}(M)$ is $k\binom{\frac{n-2}{2}+k-1}{k-1}.$  We have $$\partial(v_{d-1,j}w_{2d-2,i})=0,\quad\mbox{for }i,j\in\{1,\ldots,k-1\}.$$ The differential has bi-degree $(-1,1)$ and ${}^{r}\Omega^{j\geq\nu_{n},*}_{n}(M)=0.$ Therefore each $v_{d-1,j}w_{2d-2,i}$ gives a homology class. We can write 
$$q^{\nu_{n}-1}_{M}(n)=k\binom{\frac{n-2}{2}+k-1}{k-1}+\overline{q}_{M}^{\nu_{n}-1}(n).$$
where $\overline{q}_{M}^{\nu_{n}-1}(n)$ is a polynomial in $n.$ We can write 
$$\binom{\frac{n-2}{2}+k-1}{k-1}=\dfrac{(\frac{n-2}{2}+1)(\frac{n-2}{2}+2))\ldots(\frac{n-2}{2}+k-1)}{(k-1)!}.$$

This implies that the degree of $q^{\nu_{n}-1}_{M}(n)$ is at least $k-1.$ From Theorem \ref{Th1}, the degree of quasi-polynomial $Q_{M}^{\nu_{n}-1}$ is at most $k-1.$ Hence the degree of $Q_{M}^{\nu_{n}-1}$ is $k-1.$ $\hfill \square$\\

\noindent\textbf{Acknowledgement}\textit{.} The author gratefully acknowledge the support from the ASSMS, GC university Lahore. This research is partially supported by Higher Education Commission of Pakistan.

\vskip 0,65 true cm




\vskip 0,65 true cm







\null\hfill  Abdus Salam School of Mathematical Sciences,\\
\null\hfill  GC University Lahore, Pakistan. \\
\null\hfill E-mail: {yameen99khan@gmail.com}

\end{document}